\documentclass[10pt,a4paper]{article}

\usepackage[draft]{optional}

\usepackage{amsfonts, amsmath, wasysym}


\usepackage{tikz}

\usepackage{amssymb,amsthm,
paralist
}

\usepackage{
latexsym,
}


\usepackage{url}

\definecolor{darkgreen}{rgb}{0,0.5,0}
\definecolor{darkred}{rgb}{0.7,0,0}
\usepackage[colorlinks, 
citecolor=darkgreen, linkcolor=darkred
]{hyperref}


\textwidth=135mm   
\textheight=228mm
\topmargin=-0.4in
\oddsidemargin=+0.4in
\evensidemargin=+0.4in



\theoremstyle{plain}



\numberwithin{equation}{section}







\newcommand{\la}{\lambda}

\renewcommand{\th}{\theta}

\newcommand{\vph}{\varphi}

\newcommand{\R}{\ensuremath{{\mathbb R}}}

\newcommand{\bx}{{\bf x}}








\newcommand{\beq}{\begin{equation}}
\newcommand{\eeq}{\end{equation}}
\newcommand{\beqa}{\begin{equation}\begin{aligned}}
\newcommand{\eeqa}{\end{aligned}\end{equation}}
\newcommand{\brmk}{\begin{rmk}}
\newcommand{\ermk}{\end{rmk}}
\newcommand{\partref}[1]{\hbox{(\csname @roman\endcsname{\ref{#1}})}}
\newcommand{\half}{\frac{1}{2}}





\theoremstyle{plain}
\newtheorem{thm}{Theorem} 

\theoremstyle{definition}
\newtheorem{rem}{Remark}

\title{\sc nontrivial breathers for ricci flow}
\author{Peter M. Topping}
\date{15 September 2021}

\begin{document}
\maketitle

\begin{abstract}
Perelman has proved that there cannot exist a nontrivial breather for Ricci flow on a closed manifold. Here we construct nontrivial expanding breathers 
for Ricci flow in all dimensions when the underlying manifold is allowed to be noncompact.
\end{abstract}


A complete Ricci flow $(M,g(t))$, for $t$ in some time interval $I$, is said to be a \emph{breather} if there exist distinct times $t_1,t_2\in I$, a scale $\la>0$ and a diffeomorphism $\vph:M\to M$ such that 
$$\vph^* g(t_1)=\la g(t_2).$$
A breather is said to be nontrivial if it is not also a Ricci soliton flow. In a Ricci soliton flow, the metric $g(t)$ is, up to scaling,  the pull-back of one fixed metric for \emph{every} $t\in I$.

In his celebrated work, Perelman \cite{P1} proved that there do not exist any nontrivial breathers on any closed manifold, completing an analysis started by Ivey \cite{ivey}.
Since then there have been many partial generalisations of this nonexistence result to the noncompact case, e.g. \cite{q_zhang, lu_zheng, y_zhang, cheng_zhang_TAMS, cheng_zhang}.
The purpose of this note is to give the first examples of nontrivial breathers.

\begin{thm}
\label{main_thm}
For each $\la>1$ 
there exists a smooth complete Ricci flow $g(t)$ on the punctured plane, for $t>0$, that is not a Ricci soliton, but which has the property that 
$$\vph^* g(t) = \la g(t/\la),$$
for all $t>0$, where $\vph$ is a diffeomorphism of the punctured plane to itself.
\end{thm}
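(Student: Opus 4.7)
The plan is to construct the Ricci flow from an initial metric that already possesses the required discrete scaling symmetry, and then transfer this symmetry to $g(t)$ for all $t>0$ via a uniqueness theorem for 2D Ricci flow.

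First I would identify the punctured plane with the cylinder $\R\times S^1$ via $z = e^{s+i\theta}$ and restrict to rotationally symmetric conformal metrics $g = e^{2v(s,t)}(ds^2+d\theta^2)$, for which Ricci flow reduces to $\partial_t v = e^{-2v}\partial_s^2 v$. Fix $\mu>1$ (for concreteness, $\mu = \lambda$), set $a:=\log\mu$, and let $\varphi(z) = \mu z$, so that $\varphi$ acts as translation by $a$ in $s$. The breather identity $\varphi^*g(t) = \lambda g(t/\lambda)$ is then equivalent to
\[
v(s+a,t) = v(s, t/\lambda) + \tfrac{1}{2}\log\lambda.
\]
Setting $\alpha_* := \tfrac{1}{2}\log\lambda / a$ and picking any smooth, nonconstant, $a$-periodic $q:\R\to\R$, I define $v_0(s) := \alpha_*s + q(s)$ and $g_0 := e^{2v_0}(ds^2+d\theta^2)$. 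Then automatically $v_0(s+a) = v_0(s) + \tfrac{1}{2}\log\lambda$, i.e.\ $\varphi^*g_0 = \lambda g_0$. The metric $g_0$ is smooth on the punctured plane, complete at $s\to+\infty$, and has finite distance to the puncture $s\to-\infty$, hence is incomplete there.

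Next, I would appeal to the theory of instantaneously complete 2D Ricci flow starting from possibly incomplete initial data (Giesen--Topping type results) to obtain a unique smooth, complete Ricci flow $g(t)$ on the punctured plane for $t>0$ with $g(t)\to g_0$ in a suitable sense as $t\downarrow 0$. Since Ricci flow is invariant under the diffeomorphism $\varphi$ and under the parabolic rescaling $(g,t)\mapsto(\lambda g,\lambda t)$, both $\varphi^*g(t)$ and $\lambda g(t/\lambda)$ are smooth, instantaneously complete Ricci flows attaining the common initial datum $\varphi^*g_0 = \lambda g_0$; uniqueness then forces $\varphi^*g(t) = \lambda g(t/\lambda)$ for all $t>0$. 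To rule out that $g(t)$ is a soliton, note that a rotationally symmetric expander matching our breather would have to take the form $v(s,t) = W(s-c\log t) + \tfrac{1}{2}\log t$ with $c = \log\mu/\log\lambda$ and $W$ satisfying $W'' = e^{2W}(\tfrac{1}{2}-cW')$; writing $\widetilde W(r) := W(r)-\alpha_* r$, and using $c\alpha_* = 1/2$, this ODE becomes $\widetilde W'' = -c\, e^{2W}\widetilde W'$, from which $\widetilde W'\to 0$ super-exponentially as $r\to+\infty$, so $\widetilde W$ has a finite limit $L$ and the induced initial data is $v_0(s) = \alpha_* s + L$. Since $q$ is nonconstant, $g(t)$ cannot be a soliton.

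The main obstacle I anticipate is the uniqueness step in the previous paragraph. The initial metric $g_0$ has a nonstandard asymptotic structure---a cone-like end at infinity dressed with a bounded periodic oscillation from $q$, and an incomplete end at the puncture---so one must check both that this class of initial data falls within the scope of the available 2D Ricci flow existence/uniqueness framework, and that $g_0$ is attained strongly enough that the candidate Ricci flows $\varphi^*g(t)$ and $\lambda g(t/\lambda)$ can be identified by their common initial datum.
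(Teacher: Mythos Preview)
Your construction is essentially the paper's: pass to the cylinder, take a conformal factor of the form (linear drift) $+$ (periodic) so that a fixed translation pulls $g_0$ back to $\lambda g_0$, run the instantaneously complete Ricci flow from $g_0$, and use uniqueness to propagate the discrete self-similarity to all $t>0$. The paper just fixes the translation to be $(x,\theta)\mapsto(x+1,\theta)$ and takes $u(x)=\tfrac12(\log\lambda)x+\sin 2\pi x$, which is your scheme with $a=1$ and $q=\sin 2\pi x$. Your concern about whether $g_0$ lies within scope is precisely what the quoted existence/uniqueness theorem absorbs: it applies to \emph{any} smooth connected surface of infinite volume, with no completeness or curvature hypotheses, and the initial datum is attained in the honest sense $g(0)=g_0$, so the identification of $\vph^*g(t)$ with $\lambda g(t/\lambda)$ via their common initial metric $\lambda g_0$ is immediate.

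The one substantive divergence is how you rule out the soliton case. The paper argues (via uniqueness again) that the flow is $\theta$-invariant, that any soliton diffeomorphisms may be taken to be cylinder translations, hence the soliton would be gradient, and then defers to the known classification of two-dimensional gradient Ricci solitons. Your ODE computation is a legitimate, more self-contained alternative for the expanding case, and your asymptotic conclusion $\widetilde W\to L$ (hence $v(\cdot,t)\to \alpha_* s+L$ as $t\downarrow 0$) is correct; but as written you only treat expanders of the form $v=W(s-c\log t)+\tfrac12\log t$. To make the argument complete you should say why the soliton must be expanding with blow-up time exactly $0$ (the flow exists for all $t>0$ and is singular at $t=0$, which excludes shrinkers and shifts the expander's origin to $0$) and briefly dismiss the steady case, or else first reduce to the gradient/translation setting as the paper does before running your ODE.
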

The theorem induces examples in all dimensions $n\geq 2$. For $n\geq 3$ we can take the Cartesian product of our example with Euclidean space $\R^{n-2}$,
and extend the diffeomorphism $\vph$ to a diffeomorphism of $\R^n=\R^2\times \R^{n-2}$ to itself given by
$$(x,y)\mapsto (\vph(x), \sqrt{\la} y).$$

When proving this theorem we choose the diffeomorphism $\vph$ to be the dilation $\vph(\bx)=e\bx$ in order to keep the construction as clean as possible, but a slight variation of the argument allows us to change the scale factor $e$ to any positive value other than $1$.

The construction will rest on previous work establishing the existence and uniqueness of instantaneously complete Ricci flows starting with incomplete Riemannian metrics of unbounded curvature.
The following is an abridged version of the existence and uniqueness theory developed in 
\cite{GT2} and \cite{ICRF_UNIQ} respectively, following \cite{JEMS}.
\begin{thm}
\label{GT_thm}
Suppose $(M,g_0)$ is any smooth connected Riemannian surface with infinite volume (possibly incomplete, possibly noncompact and possibly with unbounded curvature).
Then there exists a unique smooth Ricci flow $g(t)$ on $M$, for $t\in [0,\infty)$, such that 
$g(0)=g_0$ and $g(t)$ is complete for all $t>0$.
\end{thm}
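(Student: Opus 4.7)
The plan is to reduce the flow to a scalar conformal PDE and handle existence and uniqueness separately: existence by approximation with uniform a priori estimates, and uniqueness by a comparison principle adapted to the instantaneously-complete setting. Two-dimensional Ricci flow preserves the conformal class, so writing $g(t)=e^{2u(t)}g_0$ converts the flow into the scalar logarithmic fast-diffusion equation
\[
\pt u = e^{-2u}\bigl(\Delta_{g_0} u - K_0\bigr),
\]
where $K_0$ is the Gauss curvature of $g_0$. Instantaneous completeness of $g(t)$ then says that $u(\cdot,t)$ must blow up sufficiently fast at every end of $M$ for each $t>0$; the infinite-volume hypothesis is what rules out extinction in finite time.

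For existence, I would exhaust $M$ by an increasing sequence of relatively compact open sets $\Omega_i$ with smooth boundary, and on each $\Omega_i$ replace $g_0$ by a conformal modification that coincides with $g_0$ well inside $\Omega_i$ but blows up near $\partial\Omega_i$ so that the resulting metric is complete on $\Omega_i$ with bounded curvature. Shi's theorem then produces a smooth Ricci flow $g_i(t)$ on $\Omega_i$ for all positive times. The two a priori bounds driving the compactness argument, both obtained by maximum-principle arguments applied to scalar quantities attached to $g_i(t)$, are a universal upper Gauss-curvature bound of the form $K_{g_i(t)}\le 1/(2t)$ and a uniform lower bound on the conformal factor $e^{2u_i}$ on each compact subset of $M$. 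The curvature bound combined with Shi's derivative estimates gives local smooth compactness; Arzel\`a--Ascoli then extracts a subsequential limit Ricci flow $g(t)$ on $M\times (0,\infty)$, with $g(0)=g_0$ recovered as the approximations become sharper on each fixed compact set. The lower bound on the conformal factor forces $g(t)$ to be complete for every $t>0$.

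For uniqueness, given two smooth complete Ricci flows $g_1(t),g_2(t)$ with $g_i(0)=g_0$, I would write $g_i=e^{2u_i}g_0$ and attempt a maximum-principle comparison for the difference $v=u_1-u_2$, which solves a linear parabolic equation of fast-diffusion type with $v(\cdot,0)\equiv 0$. The central difficulty is that the standard parabolic maximum principle fails: $M$ may be incomplete at $t=0$ and the $u_i$ are typically unbounded, so comparison functions must be constructed by hand. The essential input from \cite{ICRF_UNIQ} is that every smooth instantaneously-complete Ricci flow on a surface is pointwise pinned from above, on any compact subset, by a universal explicit barrier derived from complete constant-curvature model flows on an exhaustion by topological disks. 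This upper control at every positive time, combined with the lower control inherited from completeness, supplies enough decay at the ends and down to $t=0$ to run a modified maximum principle and conclude $v\equiv 0$.

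The main obstacle in both halves is the combination of unbounded curvature and possible incompleteness of $g_0$: Shi's theorem and the de~Turck trick do not apply directly, and there is no off-the-shelf maximum principle. What makes the argument close is the rigidly conformal character of Ricci flow in two dimensions, which allows the entire problem to be phrased in terms of a single scalar function $u$, together with the availability of sharp constant-curvature model barriers that control every candidate solution both from above and from below.
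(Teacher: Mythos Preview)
The paper does not prove this theorem at all: it is quoted as a black box, with existence attributed to \cite{GT2} (building on \cite{JEMS}) and uniqueness to \cite{ICRF_UNIQ}. There is therefore no ``paper's own proof'' to compare your attempt against.

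That said, your sketch is broadly faithful to what those cited references actually do. Existence in \cite{GT2} is indeed obtained by exhausting $M$, modifying $g_0$ conformally near the boundary of each exhausting domain to get complete bounded-curvature approximants, running Shi's flow, and passing to a limit using a universal upper curvature bound together with local lower bounds on the conformal factor. Uniqueness in \cite{ICRF_UNIQ} is indeed a comparison argument for the conformal factors, and the decisive step is the a priori \emph{upper} barrier you mention: every instantaneously complete solution is shown to lie below the maximally stretched solution built in the existence step, which is what supplies the missing control at infinity and down to $t=0$ that the standard maximum principle lacks. One point to sharpen: your description says the barrier comes from ``complete constant-curvature model flows on an exhaustion by topological disks''; in fact the key upper barriers in \cite{ICRF_UNIQ} are built from the hyperbolic metrics (via the uniformisation theorem) on the exhausting domains, and the argument also uses a direct ODE comparison rather than a linearisation of the difference $v=u_1-u_2$.
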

In order to understand the effect of this theorem it may help to consider the case that $(M,g_0)$ is the punctured plane with the Euclidean metric. In this special case the subsequent Ricci flow $g(t)$  provided by the theorem would be an expanding  Ricci soliton. A hyperbolic cusp of curvature $-1/(2t)$ instantaneously develops near to the puncture and gradually spreads out.

\begin{proof}[Proof of Theorem \ref{main_thm}]
The breather is to exist on the punctured plane, but this is conformally equivalent to the infinite cylinder $M=\R\times S^1$ via the diffeomorphism $(r,\th)\mapsto (\log r,\th)$ so we can work there. In coordinates $(x,\th)$, the dilation by a factor $e$ on the punctured plane becomes the diffeomorphism 
$$\vph(x,\th)=(x+1,\th),$$
that shifts along the cylinder $M$.
If we define a smooth metric 
$g_0=e^{2u(x,\th)}(dx^2+d\th^2)$ on $M$ by specifying
$$u(x,\th)=\half(\log\la)x+\sin 2\pi x$$
then 
$$\vph^* g_0 =e^{2u(x+1,\th)}(dx^2+d\th^2)= \la g_0.$$
Let $g(t)$, $t\in [0,\infty)$, be the unique instantaneously complete Ricci flow on $M$ starting with $g_0$ as given by Theorem \ref{GT_thm}. 
Then $\vph^* g(t)$ is also an instantaneously  complete Ricci flow, this time starting with 
$\vph^* g_0 =\la g_0$.
But the parabolic rescaling $\la g(t/\la)$ is also an instantaneously complete Ricci flow starting with 
the same initial metric $\la g_0$, and by uniqueness we must then have
$$\vph^* g(t) = \la g(t/\la)$$
as required.

There are multiple ways to verify that we have not inadvertently constructed a Ricci soliton. The flow we have constructed must be $\th$-invariant, by uniqueness, 
and any diffeomorphisms that induced a soliton structure would have to be conformal and (without loss of generality) could be taken to be translations of the cylinder. 
Therefore, if the flow were a soliton flow then it would have to be a \emph{gradient} soliton flow.
But such flows have been classified and do not coincide with our flow; see e.g. 
\cite{Bernstein_Mettler} and the references therein.
\end{proof}

\begin{rem}
The same ideas used here can be employed to find new examples of Ricci solitons, for example by flowing the flat half space for time $1$. However, the full power of this method can only be realised once one has extended the theory of instantaneously complete Ricci flows to cover more singular initial data. The resulting theory will be presented 
in \cite{TY3}.
\end{rem}

\begin{rem}
The same strategy as used here for Ricci flow can be used to construct nontrivial breathers for other scale-invariant geometric flows provided that a sufficiently strong existence and uniqueness theory is available. We consider this now for mean curvature flow, as was proposed to us by Felix Schulze.
As for Ricci flow, we only need work in the lowest dimension and consider curve shortening flow in the plane because we can always take a product with an additional Euclidean factor. 
The basic requirement is to write down a curve in the plane that is invariant under a dilation $\vph:\R^2\to\R^2$ such as $\vph(\bx)=e \bx$,  but not under every dilation, 
and that can be evolved uniquely under curve shortening flow.
The task is simplified a little by considering graphical solutions, and one suitable example of a curve would be the graph of the Lipschitz function $f:\R\to\R$ defined by
$$f(x)=\left\{
\begin{aligned}
|x|\sin(2\pi\log |x|) \quad &  \text{for } x\neq 0\\
0 \quad &  \text{for }x=0,
\end{aligned}
\right. $$
as illustrated in Figure \ref{fig:f}.
This curve has an evolution under curve shortening flow that exists for all time
(see for example \cite{EH1} or \cite{CZ}) and this solution can be written as the graph of a time dependent function $F(\cdot,t)$ where $F:\R\times [0,\infty)\to\R$ is 
smooth away from $(0,0)$ where it is nevertheless continuous. 
This evolution is seen to be unique either by a simple barrier and maximum principle argument or via a more general result such as \cite[Proposition 5.1]{CZ}.
Because $f(x)=\frac{1}{e}f(ex)$, we find that the parabolically scaled solution 
$\frac{1}{e}F(ex,e^2t)$ has the same initial data as the flow $F(x,t)$, and by uniqueness we deduce that
$$F(x,t)=\frac{1}{e}F(ex,e^2t).$$
This says that if we pull back the evolution in the plane by the dilation $\vph$ then we get the same flow as before with time scaled by a factor $e^2$, which makes it a breather. 
\end{rem}

\noindent
\emph{Acknowledgements:}
This work was supported by EPSRC grant reference EP/T019824/1.

\begin{figure}
\centering
\begin{tikzpicture}

\draw [<->] (-5,0) -- (5,0) node[below]{$x$} ;
\draw [<->] (0,-4) -- (0,4) node[left]{$f(x)$} ;

\draw[thick, domain=0.001:0.5, samples=500] 
plot (\x, {
(abs(\x)*sin(deg(2*3.142*ln(\x))))
});
\draw[thick, domain=0.5:5, samples=200] 
plot (\x, {
(abs(\x)*sin(deg(2*3.142*ln(\x))))
});
\draw[thick, domain=-0.001:-0.5, samples=500] 
plot (\x, {
(-\x*sin(deg(2*3.142*ln(-\x))))
});
\draw[thick, domain=-0.5:-5, samples=200] 
plot (\x, {
(-\x*sin(deg(2*3.142*ln(-\x))))
});

\end{tikzpicture}
\caption{Curve shortening flow breather}
\label{fig:f}
\end{figure}
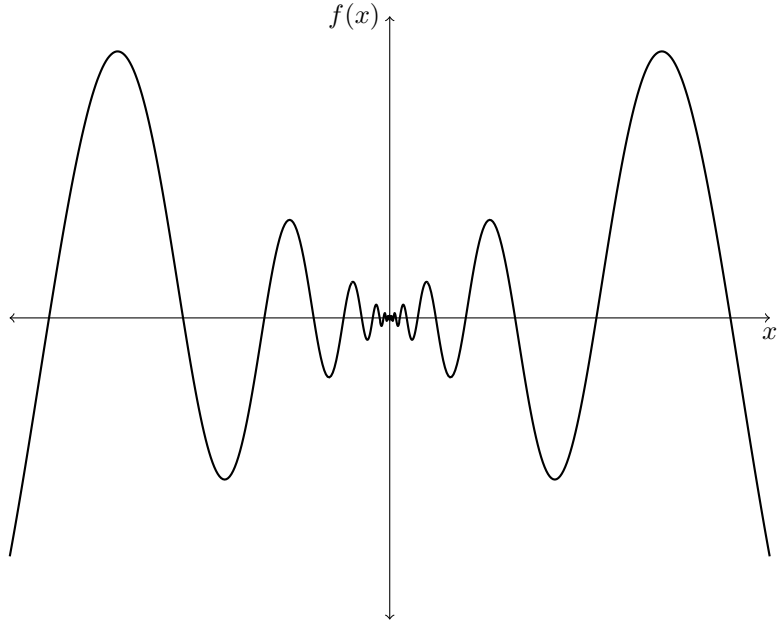

\noindent
\url{https://homepages.warwick.ac.uk/~maseq/}

\noindent
{\sc Mathematics Institute, University of Warwick, Coventry,
CV4 7AL, UK.}


\begin{thebibliography}{99}

\bibitem{Bernstein_Mettler} J. Bernstein and T. Mettler,
\emph{Two-Dimensional Gradient Ricci Solitons Revisited}
International Mathematics Research Notices, {\bf 2015}
(2015) 78--98.


%
%
%

\bibitem{cheng_zhang_TAMS} L. Cheng and Y. Zhang,
\emph{Perelman-type no breather theorem for noncompact Ricci flows.}
Trans. Amer. Math. Soc., to appear.

\bibitem{cheng_zhang} L. Cheng and Y. Zhang,
\emph{A no expanding breather theorem for noncompact Ricci flows.}
\url{https://arxiv.org/abs/2104.02834} (2021).

\bibitem{CZ} K.-S. Chou and X.-P. Zhu,
\emph{Shortening complete plane curves.}
J. Differential Geometry, {\bf 50} (1998) 471--504.

%
%
%
%
%

\bibitem{EH1} K. Ecker and G. Huisken, \emph{Mean curvature evolution of entire graphs.}
Annals of Math. {\bf 130} (1989) 453--471.

\bibitem{GT2} {G. Giesen and P. M. Topping}, \emph{Existence of Ricci flows of incomplete surfaces.\/} Comm. Partial Differential Equations, {\bf 36} (2011) 1860--1880.

%
%
%
%
%
%

\bibitem{ivey} T. Ivey, 
\emph{Ricci solitons on compact three-manifolds}
Differential Geometry and its Applications,
{\bf 3} (1993) 301--307.

\bibitem{lu_zheng} P. Lu and Y. Zheng, 
\emph{New proofs of Perelman's theorem on shrinking breathers in Ricci flow.}
J. Geom. Anal. {\bf 28} (2018) 3718--3724.


\bibitem{P1} {G. Perelman}, 
\emph{The entropy formula for the Ricci flow 
and its geometric applications.\/}
\url{http://arXiv.org/abs/math/0211159}v1  (2002).

%
%
%
%

\bibitem{JEMS} P. M. Topping, \emph{Ricci flow compactness 
via pseudolocality, and flows with incomplete initial metrics.\/}
J. Eur. Math. Soc. {\bf 12} (2010) 1429--1451.


\bibitem{ICRF_UNIQ} P. M. Topping, \emph{Uniqueness of instantaneously complete Ricci flows.\/} Geometry and Topology {\bf 19} (2015) 1477--1492.

%

\bibitem{TY3} {P. M. Topping and H. Yin}, \emph{Smoothing a measure on a Riemann 
surface using Ricci flow.} Preprint 2021.
\url{https://arxiv.org/abs/2107.14686}

%


\bibitem{q_zhang} Q. S. Zhang,
\emph{A no breathers theorem for some noncompact Ricci flows.}
Asian J. Math. {\bf 18} (2014) 727--756.


\bibitem{y_zhang} Y. Zhang, 
\emph{A Note on Perelman's No Shrinking Breather Theorem.}
J. Geom. Anal. {\bf 29} (2019) 2702--2708.


\end{thebibliography}
\end{document}